\DeclareMathOperator{\Ima}{Im}
\newcommand{\diff}{\mathop{}\!d}
\newcommand{\grad}{\mathop{}\!\nabla}
\theoremstyle{plain}
\newtheorem{theorem}{Theorem}[section]
\newtheorem{proposition}[theorem]{Proposition}
\newtheorem{lemma}[theorem]{Lemma}
\newtheorem{corollary}[theorem]{Corollary}
\theoremstyle{remark}
\newtheorem{remark}[theorem]{Remark}
\newtheorem*{remark*}{Remark}
\newtheorem*{remarks*}{Remarks}
\theoremstyle{definition}
\newtheorem{definition}[theorem]{Definition}
\newtheorem*{notation*}{Notation}
\begin{document}
\title[Curvature-adapted submanifolds of Lie groups]{Curvature-adapted submanifolds\\of semi-Riemannian groups}

\author{Margarida Camarinha}
\address{CMUC\\
University of Coimbra\\
Department of Mathematics\\
3000-143 Coimbra\\
Portugal}
\email{mmlsc@mat.uc.pt}
\thanks{This work was partially supported by the Centre for Mathematics of the University of Coimbra -- UIDB/00324/2020, funded by the Portuguese Government through FCT/MCTES.\endgraf
The second author was partially supported by Austrian Science Fund (FWF) project F~77 (SFB ``Advanced Computational Design'').}
\author{Matteo Raffaelli}
\address{CMUC\\
University of Coimbra\\
Department of Mathematics\\
3000-143 Coimbra\\
Portugal}
\curraddr{Institute of Discrete Mathematics and Geometry\\
TU Wien\\
Wiedner Hauptstra{\ss}e 8-10/104\\
1040 Vienna\\
Austria}
\email{matteo.raffaelli@tuwien.ac.at}

\date{May 12, 2023}
\subjclass[2020]{Primary: 53C40; Secondary: 53B25, 53C30}
\keywords{Abelian normal bundle, bi-invariant metric, closed normal bundle, curvature adapted, invariant shape operator, semi-Riemannian group}

\begin{abstract}
We study semi-Riemannian submanifolds of arbitrary codimension in a Lie group $G$ equipped with a bi-invariant metric. In particular, we show that, if the normal bundle of $M \subset G$ is closed under the Lie bracket, then any normal Jacobi operator $K$ of $M$ equals the square of the associated invariant shape operator $\alpha$. This permits to understand curvature adaptedness to $G$ geometrically, in terms of left translations. For example, in the case where $M$ is a Riemannian hypersurface, our main result states that the normal Jacobi operator commutes with the ordinary shape operator precisely when the left-invariant extension of each of its eigenspaces has first-order tangency with $M$ along all the others. As a further consequence of the equality $K = \alpha^{2}$, we obtain a new case-independent proof of a well-known fact: every three-dimensional Lie group equipped with a bi-invariant semi-Riemannian metric has constant curvature.
\end{abstract}
\maketitle

\tableofcontents

\section{Introduction and main result} \label{IntroductionMainResult}
Given a Riemannian manifold, it is natural to study submanifolds whose geometry is somehow adapted to that of the ambient space. This idea led to the concept of \emph{curvature-adapted} submanifold~\cite{datri1979, berndt1991, berndt1992-2}; a concept that, since its introduction, has attracted the interest of many geometers.

Here we define curvature adaptedness in a slightly more general setting, that of a \emph{semi-Riemannian} manifold $(Q,g)\equiv Q$.

Let $M$ be an $m$-dimensional semi-Riemannian submanifold of $Q$, let $N^{1}M$ be its unit normal bundle, and let $R$ be the ambient curvature tensor. For $(p,\eta)\equiv \eta  \in N^{1}_{p}M$, the \textit{normal Jacobi operator}
\begin{align*}
K_{\eta} \equiv K \colon T_{p}M &\to T_{p}Q \\
v &\mapsto R(\eta,v)\eta
\end{align*}
\textit{of $M$ \textup{(}with respect to $\eta$\textup{)}} measures the curvature of the ambient manifold along $\eta$. On the other hand, denoting by $N$ a unit normal local extension of $\eta$ along $M$, and by $\nabla$ the Levi-Civita connection of $Q$, the \textit{shape operator}
\begin{align*}
A_{\eta} \equiv A \colon T_{p}M &\to T_{p}M \\
v &\mapsto \pi^{\top} \grad_{v}N
\end{align*}
\textit{of $M$ \textup{(}with respect to $\eta$\textup{)}} describes the curvature of $M$ as a submanifold of $Q$; here $\pi^{\top}$ denotes orthogonal projection onto $T_{p}M$.

\begin{definition}\label{curvatureAdaptednessDEF}
One says that $M$ is \textit{curvature adapted \textup{(}to $Q$\textup{)} at a point $p\in M$} if, for every $\eta \in N^{1}_{p}M$,
\begin{enumerate}[label=(\arabic*)]
\item \label{cond1} The normal Jacobi operator leaves $T_{p}M$ invariant, i.e., $K(T_{p}M) \subset T_{p}M$;
\item \label{cond2} The operators $A$ and $K$ commute, i.e., $K \circ A = A \circ K$.
\end{enumerate}

Consequently, one calls $M$ \textit{curvature adapted} if it is curvature adapted at $p$ for all $p \in M$.
\end{definition}

\begin{remark}
Condition~\ref{cond1} in Definition~\ref{curvatureAdaptednessDEF} is always satisfied for hypersurfaces.
\end{remark}

\begin{remark}
Both $A$ and $K$ are self-adjoint with respect to the induced semi-Riemannian metric. Thus, if these operators are also diagonalizable, then $M$ is curvature adapted at $p$ precisely when they share a common orthonormal basis of eigenvectors (Lemma~\ref{OrthogonalEigendecompositionLemma}). Recall that diagonalizability is automatic if the metric is positive definite.
\end{remark}

\begin{remark}
In general, it could also be interesting to consider a weaker notion of curvature adaptedness, obtained by replacing the operator $K$ with $\pi^{\top}K$ in the definition. This modification would make any totally umbilic submanifold necessarily curvature adapted.
\end{remark}

It is easy to see (e.g., using \cite[Lemma~2]{graves1978}) that every semi-Riemannian submanifold of a real semi-Riemannian space form is curvature adapted. However, for other ambient spaces, the definition is restrictive. For example, if $Q$ is an $(m+1)$-dimensional nonflat complex space form with complex structure $J$, then $A$ and $K$ commute precisely when $-J\eta$ is an eigenvector of $A$. Also, if $Q$ is an $(m+1)$-dimensional nonflat quaternionic space form with quaternionic structure $\mathfrak{I}$, then $A$ and $K$ commute precisely when the maximal subspace of $T_{p}M$ invariant under $\mathfrak{I}$ is also invariant under $A$; see \cite{berndt1991} and \cite[sec.~9.8]{cecil2015}.

In a symmetric space of nonconstant curvature, the situation is more involved, yet many interesting results have been obtained. Among others (see for example \cite{koike2005, murphy2012, koike2014-1, koike2014-2}), the most important is arguably Gray's theorem~\cite[Theorem~6.14]{gray2004}, which states that any tubular hypersurface around a curvature-adapted submanifold is itself curvature adapted.

Gray's theorem has been further generalized to the family of Riemannian manifolds such that, for every geodesic $\gamma$, the Jacobi operator $R(\dot{\gamma},\cdot)\dot{\gamma}$ is diagonalizable by a \emph{parallel} orthonormal frame field along $\gamma$. It turns out that, in such spaces, the classification of the curvature-adapted submanifolds is fully determined by that of the curvature-adapted hypersurfaces~\cite{berndt1992, berndt1992-2}.

In this article, we shall examine the case where $Q$ is a \textit{semi-Riemannian group}, i.e., a Lie group $G \equiv (G, \langle \cdot{,} \cdot \rangle)$ equipped with a bi-invariant semi-Riemannian metric $\langle \cdot{,} \cdot \rangle$. In particular, we will focus our attention on the class of (semi-Riemannian) submanifolds of $G$ having \emph{closed} normal bundle; here by \emph{closed} we mean that each normal space of $M$ corresponds, under the group's left action, to a subspace of $\mathfrak{g}$ that is closed under the Lie bracket, i.e., to a Lie subalgebra (Definition~\ref{ClosedNormalBundleDEF}).

Examples of Lie groups $G$ abounds~\cite{ovando2016}: every semisimple Lie group can be furnished with a bi-invariant metric, and every compact Lie group admits one that is Riemannian. Note that, if $G$ is absolutely simple, then any bi-invariant metric on $G$ is a scalar multiple of the Killing form of $\mathfrak{g}$. 

In order to explain our main result, we first set up some notation. Provided that $K$ is diagonalizable, let $(e_{1}, \dotsc, e_{m})$ be an orthonormal basis of eigenvectors of $K$, that is, a basis of $T_{p}M$ such that $\lvert \langle e_{j}, e_{h} \rangle\rvert =\delta_{jh}$ and $K(e_{j}) = \lambda_{j} e_{j}$ for all $j,h = 1,\dotsc,m$; for each $j$, let $e_{j}^{\mathrm{L}}$ denote the left-invariant extension of $e_{j}$.

\begin{theorem} \label{TH1}
Let $M$ be a semi-Riemannian submanifolds of a semi-Riemannian group $G$. Assume that $K$ is diagonalizable and that the normal space of $M$ at $p$ is closed under the Lie bracket. Then the following statements are equivalent:
\begin{enumerate}[font=\upshape, label=(\roman*)]
\item $A$ and $K$ commute.
\item \label{item2} If $\lambda_{j}$ and $\lambda_{h}$ are distinct eigenvalues, then $e_{j}(\langle N, e_{h}^{\mathrm{L}} \rangle) = 0$.
\end{enumerate}
Moreover, if the induced metric is positive definite at $p$, then $e_{j}(\langle N, e_{h}^{\mathrm{L}} \rangle) = e_{h}(\langle N, e_{j}^{\mathrm{L}} \rangle)$ whenever $e_{j}$ and $e_{h}$ are in distinct eigenspaces.
\end{theorem}

\begin{remark}
For all $j =1,\dotsc,m$, we have $\lambda_{j} = -\mathrm{sec}(e_{j},\eta) \leq 0$; see section~\ref{PRE}.
\end{remark}

Roughly speaking, Theorem~\ref{TH1}\ref{item2} expresses the condition that the left-invariant extension of each eigenspace of $K$ is ``orthogonal to first order" to $N$ (at $p$) along all other eigenspaces; in other words, along any curve that starts at $p$ and is tangent to a different eigenspace. 

In particular, in the case of hypersurfaces, the condition on the normal space is automatically satisfied. Specializing the theorem to that case, we obtain the result below.
\begin{corollary}
If $M$ is a hypersurface and $K$ is diagonalizable, then the following statements are equivalent:
\begin{enumerate}[font=\upshape, label=(\roman*)]
\item $A$ and $K$ commute.
\item The left-invariant extension of each eigenspace of $K$ has first-order tangency with $M$ along all other eigenspaces.
\end{enumerate}
\end{corollary}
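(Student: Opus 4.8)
The plan is to deduce the corollary as the codimension-one specialization of Theorem~\ref{TH1}. First I would note that when $\dim G = m+1$ the submanifold $M$ is a hypersurface, so its normal space at $p$ is the line spanned by a unit normal $\eta$. Since $[\eta,\eta]=0$, this line is trivially a one-dimensional Lie subalgebra of $\mathfrak{g}$, and hence the closedness hypothesis of Theorem~\ref{TH1} is automatically met; by the remark following Definition~\ref{curvatureAdaptednessDEF}, condition~\ref{cond1} likewise holds, so $K$ preserves $T_pM$ and the compositions $A\circ K$ and $K\circ A$ are well defined. With $K$ assumed diagonalizable, I may therefore invoke the first alternative of Theorem~\ref{TH1} verbatim, obtaining the equivalence of its items~\ref{item1} and~\ref{item3}.

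It then remains only to rephrase item~\ref{item3} in the hypersurface setting, and this is where the two statements differ solely by a change of words. Because $M$ is a semi-Riemannian hypersurface, its unit normal satisfies $\langle \eta, \eta \rangle = \pm 1 \neq 0$, so at each $q\in M$ the normal line $\mathbb{R}N_q$ is nondegenerate and one has the orthogonal splitting $T_qG = T_qM \oplus \mathbb{R}N_q$. Consequently a vector tangent to $G$ along $M$ is orthogonal to $N$ if and only if it lies in $T_qM$, i.e.\ if and only if it is tangent to $M$. Under this dictionary, the assertion in item~\ref{item3} that the left-invariant extension of each eigenspace of $K$ is orthogonal to $N$ along all the others turns verbatim into the assertion that it is tangent to $M$ along all the others, which is precisely item (ii) of the corollary. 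This completes the equivalence.

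I do not anticipate a genuine obstacle, as the corollary is essentially a translation of Theorem~\ref{TH1} through the hypersurface identification $T_qM = N_q^{\perp}$. The single point deserving care is the semi-Riemannian one: one must verify that the normal line is nondegenerate---which is guaranteed by $\langle \eta, \eta \rangle = \pm 1$---so that $N$-orthogonality and tangency to $M$ coincide. Were the normal direction permitted to be lightlike, this identification would break down; but such a possibility is excluded here, since $\eta$ is by hypothesis a unit normal.
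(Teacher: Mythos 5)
Your proof is correct and follows essentially the same route as the paper, which obtains the corollary simply by specializing Theorem~\ref{TH1} to hypersurfaces, where the one-dimensional normal space is trivially a Lie subalgebra. Your explicit verification that $N$-orthogonality coincides with tangency to $M$ via the nondegenerate splitting $T_qG = T_qM \oplus \mathbb{R}N_q$ is exactly the (implicit) dictionary the paper relies on when rephrasing item~\ref{item3} of the theorem.
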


The main significance of Theorem~\ref{TH1} lies in the fact that it permits to understand curvature adaptedness to $G$ geometrically, in terms of left translations. More precisely, it reveals that, in order for a generic submanifold (with closed normal bundle) to be curvature adapted, its tangent bundle needs to behave reasonably well under left translations. Note that the tangent bundle of a Lie subgroup is fully left-invariant; conversely, if $M$ is a closed, connected submanifold that contains the identity and has left-invariant tangent bundle, then it is a Lie subgroup.

The basic fact that allows us to prove Theorem~\ref{TH1} is that the shape operator of $M\subset H$ with respect to $\eta$, being $H$ any Lie group with a left-invariant semi-Riemannian metric, decomposes as the sum of two terms~\cite{ripoll1991}: an \emph{invariant shape operator}, which depends only on $\eta$, $T_{p}M$ and $H$; plus a second term, here denoted by $\mathcal{W}$, that is closely related to the Gauss map of $M$; see section~\ref{ISOP} for details. In particular, if the metric is bi-invariant and the normal bundle is closed, then the invariant shape operator commutes with $K$ (Proposition~\ref{PROP9}), and so, by linearity, commutativity of $A$ and $K$ reduces to that of $\mathcal{W}$ and $K$.

In fact, if $\dim M = 2$ or $M$ is Riemannian, then the nonzero eigenvalues of $K$ have even multiplicities (Corollary~\ref{COR11}), which leads us to the following conclusions.
\begin{proposition}\label{PROP4.0}
Every two-dimensional surface with closed normal bundle is curvature adapted to $G$.
\end{proposition}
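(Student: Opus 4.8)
The plan is to verify the two defining conditions of curvature adaptedness (Definition~\ref{curvatureAdaptednessDEF}) at every point $p \in M$ and for every unit normal $\eta \in N^{1}_{p}M$, using the factorization $K = \alpha^{2}$ of the normal Jacobi operator (available precisely because the normal bundle is closed) together with the even-multiplicity statement of Corollary~\ref{COR11}. Note that the argument makes no use of the codimension, so it covers surfaces of arbitrary codimension in $G$.

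Condition~\ref{cond1} is immediate: since the invariant shape operator $\alpha$ is an endomorphism of $T_{p}M$, so is its square, whence $K(T_{p}M) = \alpha^{2}(T_{p}M) \subset T_{p}M$. For condition~\ref{cond2} I would argue that $K$ is forced to be a scalar multiple of the identity. Indeed, by Corollary~\ref{COR11} every nonzero eigenvalue of $K$ has even multiplicity; since $\dim T_{p}M = 2$, this leaves only two possibilities, namely $K = 0$, or $K$ having a single nonzero eigenvalue of multiplicity two, whose eigenspace then exhausts $T_{p}M$. In either case $K = \lambda\,\mathrm{Id}_{T_{p}M}$ for some scalar $\lambda$, and a scalar operator commutes with every endomorphism of $T_{p}M$, in particular with $A$. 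Hence $A \circ K = K \circ A$; both conditions hold for all $p$ and all $\eta$, and $M$ is curvature adapted. (In contrast to Theorem~\ref{TH1}, here the $\mathcal{W}$-reduction is not even needed, because $K$ commutes with $A$ outright.)

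The one step that I expect to require genuine care is the passage from \emph{even multiplicity} to \emph{$K$ is scalar} when the induced metric on $M$ is indefinite. In Lorentzian signature a self-adjoint operator with a double eigenvalue need not be diagonalizable---for instance $\lambda\,\mathrm{Id} + N$, with $N$ a nonzero self-adjoint nilpotent, is self-adjoint yet is not a multiple of the identity---so even multiplicity by itself does \emph{not} force $K$ to be scalar. What rescues the argument is again the factorization $K = \alpha^{2}$: because $\alpha$ is skew-adjoint with respect to the induced metric (as follows from its description in section~\ref{ISOP}), a direct two-dimensional computation shows that $\alpha^{2}$ is automatically a genuine multiple of $\mathrm{Id}_{T_{p}M}$, in either signature. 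This is precisely why the proposition needs no separate diagonalizability hypothesis on $K$.
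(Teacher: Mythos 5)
Your proof is correct and takes essentially the same route as the paper: curvature adaptedness is reduced via Proposition~\ref{PROP9} to the factorization $K=\alpha\circ\alpha$, and commutativity with $A$ follows because in dimension two the square of the skew-adjoint operator $\alpha$ is a multiple of the identity in either signature---precisely the observation recorded in the remark following Corollary~\ref{COR11}. You were also right to flag that Corollary~\ref{COR11} alone (even multiplicities) would not settle the indefinite case, and your repair via the direct $2\times 2$ computation with skew-adjoint $\alpha$ is exactly the paper's own.
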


\begin{proposition}\label{PROP4}
If $M$ is a three-dimensional Riemannian submanifold of $G$ with closed normal bundle, and if $K \neq 0$ for all $\eta \in N^{1}_{p}M$, then the following statements are equivalent:
\begin{enumerate}[font=\upshape, label=(\roman*)]
\item $M$ is curvature adapted in a neighborhood $U$ of $p$.
\item For all $\eta \in N^{1}U$, the $0$-eigenvector of $K$ is an eigenvector of $A$.
\end{enumerate}
\end{proposition}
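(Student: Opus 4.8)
The plan is to reduce curvature adaptedness to the single eigenvector condition in (ii) by exploiting the rigid spectral structure that $K$ acquires on a three-dimensional Riemannian submanifold. First, since the normal bundle is closed, the equality $K=\alpha^{2}$ shows in particular that $K$ maps $T_{q}M$ into itself for every $q$; hence condition \ref{cond1} of Definition \ref{curvatureAdaptednessDEF} is automatic, and curvature adaptedness at a point is equivalent to the commutativity of $A$ and $K$ for each unit normal there. Both (i) and (ii) are therefore conjunctions, over all $\eta\in N^{1}U$, of pointwise statements about the pair $(A_{\eta},K_{\eta})$, so it suffices to establish the equivalence at each fixed $(q,\eta)$ with $K_{\eta}\neq 0$. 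Using the hypothesis $K\neq 0$ on $N^{1}_{p}M$ together with continuity of $\eta\mapsto K_{\eta}$, we may shrink $U$ so that $K$ is nowhere vanishing on $N^{1}U$ (immediate when the normal bundle is definite, since then the fibre $N^{1}_{p}M$ is compact).

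Next I would pin down the spectrum of $K_{\eta}$. Because $M$ is Riemannian and three-dimensional, Corollary \ref{COR11} guarantees that every nonzero eigenvalue of $K$ has even multiplicity. Since these multiplicities are positive (as $K\neq 0$) and sum to at most $3$, the only admissible configuration is a single nonzero eigenvalue $\mu$ of multiplicity $2$ accompanied by a $0$-eigenspace of dimension exactly $1$. In particular the $0$-eigenspace is a line, so ``the $0$-eigenvector'' appearing in (ii) is well defined up to scaling, and --- $K$ being self-adjoint for the positive-definite induced metric --- the $\mu$-eigenspace is precisely its orthogonal complement in $T_{q}M$.

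With the $1+2$ splitting in hand, the pointwise equivalence is immediate. If $A$ and $K$ commute, then $A$ preserves each eigenspace of $K$, in particular the line spanned by the $0$-eigenvector, which is thus an eigenvector of $A$; this is (i)$\Rightarrow$(ii). Conversely, assume the $0$-eigenvector $e_{1}$ is an eigenvector of $A$. Then $A$ preserves $\Span(e_{1})$, and since $A$ is self-adjoint with respect to the positive-definite induced metric it also preserves the orthogonal complement $\Span(e_{1})^{\perp}$, which is exactly the $\mu$-eigenspace of $K$. Hence $A$ stabilizes both eigenspaces of $K$, so the two operators commute (Lemma \ref{OrthogonalEigendecompositionLemma}); this is (ii)$\Rightarrow$(i).

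The step requiring the most care is the converse implication, and the reason it works is precisely the one-dimensionality of the $0$-eigenspace. It is the even-multiplicity conclusion of Corollary \ref{COR11}, forcing the $1+2$ decomposition of $T_{q}M$, that allows the single scalar condition ``$e_{1}$ is an $A$-eigenvector'' to propagate, through self-adjointness, into full commutativity; were a $1+1+1$ splitting permitted, control of one $0$-direction would say nothing about the remaining eigenspaces. The hypotheses $K\neq 0$ and $\dim M=3$ are exactly what rule this out, and the only genuinely delicate point beyond linear algebra is ensuring, via the continuity argument above, that this spectral picture persists uniformly over the chosen neighborhood $U$.
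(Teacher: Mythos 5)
Your proof is correct and follows essentially the same route as the paper: Corollary \ref{COR11} forces the $1{+}2$ spectral splitting of $K$ (one-dimensional $0$-eigenspace, two-dimensional nonzero eigenspace), a continuity argument gives the neighborhood $U$, and the pointwise equivalence is the same linear-algebra step, with your use of self-adjointness of $A$ to preserve $\Span(e_{1})^{\perp}$ being just a slightly more explicit version of the paper's remark that the other two eigenvectors of $A$ lie in the $\lambda_{1}$-eigenspace of $K$.
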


Proposition~\ref{PROP4.0} implies that, if $\dim G =3$, then every Riemannian surface in $G$ is curvature adapted.  This is by no means surprising, because every three-dimensional semi-Riemannian group has constant curvature. Indeed, it is well known that the Lie algebra of such a Lie group is either abelian or isomorphic to $\mathfrak{sl}(2,\mathbb{R})$ or $\mathfrak{su}(2)$~\cite{ovando2016}; in each of the latter two cases, a direct computation would reveal that the curvature of the Killing form (which, up to scaling, coincides with the metric) vanishes. Alternatively, since the Ricci curvature of $G$ is proportional to the Killing form, the statement follows from the fact that every three-dimensional Einstein manifold has constant curvature~\cite[p.~49]{besse2008}; see Remark~\ref{alternativeProof}.

On the other hand, applying classical results of Cartan and Dajczer--Nomizu (which, for the reader's convenience, are included in section~\ref{PRE}), we can prove the following statement.

%
\begin{lemma}
 Suppose that $Q$ is a Riemannian \textup{(}resp., Lorentzian\textup{)} manifold of dimension at least three. If, for some $p\in Q$ and each $x \in T_{p}Q$ such that $\langle x, x \rangle=1$ \textup{(}resp., $-1$\textup{)}, the map from $x^{\perp}$ to $x^{\perp}$ defined by $y \mapsto R(x,y)x$ is a multiple of the identity, then $Q$ has constant curvature.
 \end{lemma}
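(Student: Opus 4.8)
The plan is to reduce the statement to a pointwise claim at $p$ --- that the curvature operator there has the constant-curvature form $R_{p}=k\,R_{0}$, where $R_{0}(x,y)z=\langle y,z\rangle x-\langle x,z\rangle y$ --- and then to pass to a global constant by Schur's lemma. First I would rewrite the hypothesis as $R(x,y)x=c(x)\,y$ for all $y\in x^{\perp}$, where $c(x)$ is a scalar depending only on $x$. A short computation using the symmetries of $R$ then gives $\mathrm{sec}(\sigma)=-\langle x,x\rangle\,c(x)$ for every nondegenerate plane $\sigma$ containing the admissible unit vector $x$; the key point is that this value is independent of the second direction spanning $\sigma$.

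Next I would show that $c$ is constant. In the Riemannian case this is immediate: any two linearly independent unit vectors $x,x'$ span a nondegenerate plane $\sigma$, and evaluating $\mathrm{sec}(\sigma)$ first with $x$ and then with $x'$ as the distinguished direction yields $c(x)=c(x')$; hence $c\equiv k$ and $\mathrm{sec}\equiv k$ on every plane at $p$. In the Lorentzian case the hypothesis only constrains timelike $x$, so it controls the mixed planes directly. Here I would use that two independent unit timelike vectors always span a Lorentzian plane, run the same two-ways computation inside it to get $c\equiv k$ on timelike vectors, and conclude $\mathrm{sec}\equiv k$ on all mixed planes.

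The main obstacle is the Lorentzian case, in which the definite planes are untouched by the hypothesis. This is exactly where the theorem of Dajczer--Nomizu is needed: since $\mathrm{sec}$ is bounded (in fact constant, equal to $k$) on the mixed planes, sectional curvature is forced to be constant on \emph{all} nondegenerate planes at $p$. With $\mathrm{sec}\equiv k$ on every nondegenerate plane --- automatic in the Riemannian case, supplied by Dajczer--Nomizu in the Lorentzian one --- Cartan's polarization lemma yields $R_{p}=k\,R_{0}$.

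Finally, the argument is pointwise, giving $R=k\,R_{0}$ at every point where the hypothesis holds; when this is the whole of $Q$, Schur's lemma --- valid since $\dim Q\ge 3$ --- makes $k$ constant, so $Q$ has constant curvature. In the intended application $Q=G$ is homogeneous, and then a single point $p$ already suffices, since the left translations are isometries and carry the relation $R_{p}=k\,R_{0}$, with the same $k$, to every point of $G$.
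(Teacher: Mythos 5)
Your proposal is correct, but it follows a genuinely different route from the paper's. The paper disposes of the lemma in one line of algebra: writing $R(x,y)x = c(x)\,y$ on $x^{\perp}$, it notes that for orthonormal $x,y,z$ (with $\langle x,x\rangle = -1$ in the Lorentzian case) one has $\langle R(x,y)z,x\rangle = -\langle R(x,y)x,z\rangle = -c(x)\langle y,z\rangle = 0$, which is exactly the hypothesis of Cartan's criterion (Lemma \ref{RiemLM}) in the Riemannian case and of Dajczer--Nomizu's Theorem~1a in the Lorentzian case; those results give pointwise constancy, and Schur's lemma finishes. You instead work with sectional curvatures: from $\mathrm{sec}(\sigma) = -\langle x,x\rangle c(x)$ for any nondegenerate plane $\sigma$ through an admissible unit vector $x$, the two-ways swap shows $c$ is constant, and your auxiliary claim that two independent unit timelike vectors span a Lorentzian plane is correct (by the reverse Cauchy--Schwarz inequality the Gram determinant is $1 - \langle x,x'\rangle^{2} < 0$). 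This makes the Riemannian case entirely self-contained---you bypass Cartan's criterion and need only the elementary polarization fact that constant sectional curvature at $p$ forces $R_{p} = k\,R_{0}$---while in the Lorentzian case you invoke the constancy/boundedness-on-timelike-planes form of Dajczer--Nomizu's theorem rather than the curvature-component-vanishing form cited in the paper; both appear among the equivalent conditions in the same source, so this is legitimate. Your approach buys transparency ($c(x)$ is identified as the common mixed curvature) and, notably, you explicitly confront the mismatch between the statement's hypothesis ``for some $p$'' and the global application of Schur's lemma, resolving it via the homogeneity of $G$ in the intended application---a point the paper's proof passes over silently; the paper's approach buys brevity. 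One cosmetic blemish: with your formula, in the Riemannian case $\mathrm{sec} = -c(x)$, so ``hence $c \equiv k$ and $\mathrm{sec} \equiv k$'' conflates $k$ with $-k$; rename the constant consistently.
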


\begin{proof}
Assume the hypothesis of the lemma. If $x,y,z$ are orthonormal vectors in $T_{p}Q$ (resp., if $x,y,z$ are orthonormal vectors in $T_{p}Q$ and $\langle x, x \rangle =-1$), then $\langle R(x,y)z,x\rangle = -\langle R(x,y)x,z\rangle=0$. Applying \cite[Lemma~1.17]{dajczer2019} (resp., \cite[Theorem~1a]{dajczer1980}) and Schur's lemma~\cite[p.~96, Exercise 21(b)]{oneill1983}, the claim follows.
\end{proof}

Corollary~\ref{COR11} thus yields a \emph{case-independent} proof that, in dimension three, every metric Lie group has constant curvature.

\begin{remark}\label{alternativeProof}
An alternative case-independent proof of the same fact may be sketched as follows. First, by Levi decomposition and the absence of simple Lie algebras of dimension one and two, observe that a three-dimensional Lie algebra is either solvable or simple. However, if a nonabelian solvable Lie algebra of dimension three admits an ad-invariant bilinear form, then such form is necessarily degenerate~\cite[Proposition~2.3]{delbarco2014}; in other words, a three-dimensional nonabelian Lie algebra admitting an ad-invariant metric is (absolutely) simple.

Suppose, thus, that $G$ is absolutely simple. Then any semi-Riemannian metric on $G$ is a scalar multiple of the Killing form of $\mathfrak{g}$, implying that $G$ is an Einstein manifold. Hence, in dimension three, it has constant curvature~\cite[p.~49]{besse2008}.
\end{remark}

The remainder of the paper is organized as follows.
In the next section we briefly review some background material. In section~\ref{ISOP} we introduce the invariant shape operator and examine its properties. In section~\ref{PROOF} we then prove Theorem~\ref{TH1} and Proposition~\ref{PROP4}. We conclude with Appendix~\ref{APPA}, where, for the sake of illustration, we give a direct proof that condition \ref{item2} in Theorem~\ref{TH1} holds whenever $p$ is an umbilical point of $M$.

Some final remarks about notation:
\begin{enumerate}
\item The indices $j,h,i$ satisfy $j,h \in \{1, \dotsc, m\}$ and $i \in \{1, \dotsc, m+n\}$; note that we always use Einstein summation convention.
\item For $x \in T_{p}H$, we denote the left-invariant extension of $x$ by $x^{\mathrm{L}}$.
\end{enumerate}

\section{Preliminaries} \label{PRE}
Here we recall some basic results that are used throughout the paper; see e.g.\ \cite{milnor1976, lee2018, oneill1983} for further details about semi-Riemannian geometry and metric Lie groups.

\subsection{Semi-Euclidean vector spaces}
Let $V$ be a real vector space, of finite dimension $d$, equipped with a nondegenerate symmetric bilinear form $f$, and let $L$ be an endomorphism of $V$ that is self-adjoint with respect to $f$, i.e., such that $f(L(x), y) = f(x,L(y))$ for all $x,y\in V$. Recall that a basis $v_{1}, \dotsc, v_{d}$ of $V$ is called \textit{orthonormal} if $\vert f(v_{j},v_{h}) \rvert= \delta_{jh}$ for all $j,h =1, \dotsc, d$.

It is well known that, if $f$ is positive definite, then $L$ is diagonalizable by an orthonormal basis of eigenvectors of $L$. Moreover, two self-adjoint endomorphisms of $V$ commute if and only if they share a common orthonormal basis of eigenvectors.

While it is not possible to fully extend these classic results beyond the positive definite case, something interesting can still be said.

\begin{lemma} \label{OrthogonalEigendecompositionLemma}
	If $L$ is diagonalizable, then there exists an orthonormal basis of eigenvectors. Moreover, any two  diagonalizable self-adjoint endomorphisms of $V$ commute if and only if they share a common orthonormal basis of eigenvectors. 
\end{lemma}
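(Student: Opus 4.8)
The plan is to reduce everything to two elementary facts: that distinct eigenspaces of a self-adjoint endomorphism are $f$-orthogonal, and that the restriction of $f$ to each eigenspace is nondegenerate. To establish the first, I would take eigenvectors $v$ and $w$ with eigenvalues $\lambda \neq \mu$ and compute $\lambda f(v,w) = f(L(v),w) = f(v,L(w)) = \mu f(v,w)$, which forces $f(v,w) = 0$. For the second, since $L$ is diagonalizable we may write $V = \bigoplus_{\lambda} E_{\lambda}$ as a direct sum of eigenspaces; if some $v \in E_{\lambda}$ were $f$-orthogonal to all of $E_{\lambda}$, then by the first fact it would be orthogonal to every other eigenspace as well, hence to all of $V$, contradicting nondegeneracy of $f$. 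Thus each $(E_{\lambda}, f|_{E_{\lambda}})$ is itself a semi-Euclidean space.

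For the first assertion of the lemma I would then invoke the standard fact that a nondegenerate symmetric bilinear form on a finite-dimensional real vector space admits an orthonormal basis. This follows by the usual orthogonalization procedure, once one observes that nondegeneracy guarantees a non-isotropic vector: if every vector were isotropic, polarization would force $f \equiv 0$. Applying this to each $(E_{\lambda}, f|_{E_{\lambda}})$ produces an orthonormal basis of eigenvectors inside each eigenspace, and concatenating these, using the $f$-orthogonality of distinct eigenspaces, yields an orthonormal eigenbasis of $V$.

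For the equivalence, the implication from a common eigenbasis to commutativity is immediate, since two endomorphisms that are simultaneously diagonal on a basis agree with their product in both orders on that basis, hence everywhere (orthonormality plays no role here). For the converse, assuming that $L$ and a second diagonalizable self-adjoint endomorphism $L'$ commute, I would note that $L'$ preserves each eigenspace $E_{\lambda}$ of $L$, because $L(L'(v)) = L'(L(v)) = \lambda L'(v)$ for $v \in E_{\lambda}$. The restriction $L'|_{E_{\lambda}}$ is self-adjoint with respect to $f|_{E_{\lambda}}$, and it remains diagonalizable because its minimal polynomial divides that of $L'$, which is a product of distinct linear factors. Applying the first part of the lemma to $L'|_{E_{\lambda}}$ on $(E_{\lambda}, f|_{E_{\lambda}})$ then gives an orthonormal basis of $E_{\lambda}$ made of eigenvectors of $L'$, and these are automatically eigenvectors of $L$. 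Gathering such bases over all eigenvalues of $L$ produces the desired common orthonormal eigenbasis.

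The step I expect to require the most care is the nondegeneracy of $f$ on each eigenspace: it is precisely what keeps the argument within the category of semi-Euclidean spaces, enabling both the orthogonalization and the recursive application of the lemma to $L'|_{E_{\lambda}}$. Everything else is a routine adaptation of the familiar positive-definite arguments, the only genuinely new feature being that \emph{orthonormal} now allows $f(v_{j},v_{j}) = \pm 1$ rather than only $+1$.
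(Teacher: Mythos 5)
Your proof is correct and follows essentially the same route as the paper's: decompose $V$ into the mutually $f$-orthogonal eigenspaces of $L$, observe that nondegeneracy of $f$ then forces each eigenspace to be nondegenerate (hence to carry an orthonormal basis), concatenate, and for the converse use that commuting operators preserve each other's eigenspaces so that the first part can be applied to $L'|_{E_{\lambda}}$. The paper compresses all of this into a few lines; you have simply supplied the details (orthogonality of distinct eigenspaces, the minimal-polynomial argument for diagonalizability of the restriction) that it leaves implicit.
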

\begin{proof}
Assume that $L$ is diagonalizable. Since $V$ is the direct sum of \emph{mutually orthogonal} eigenspaces of $L$, each eigenspace must be nondegenerate, and so it has an orthonormal basis; by concatenating these bases, we obtain an orthonormal basis of $V$.

As for the second statement, one direction is obvious; for the other, it suffices to note that any two commuting linear maps on $V$ preserve each other's eigenspaces.
\end{proof}

\subsection{Semi-Riemannian geometry}
Let $(Q,g)$ be a semi-Riemannian manifold, and let $\nabla$ be its Levi-Civita connection. The curvature endomorphism $R \colon \mathfrak{X}(Q)^{3} \to \mathfrak{X}(Q)$ of $(Q,g)$ is the $(1,3)$-tensor field on $Q$ defined by
\begin{equation*}
	R(X,Y)Z= \nabla_{X}\nabla_{Y}Z-\nabla_{Y}\nabla_{X}Z-\nabla_{[X,Y]}Z.
\end{equation*}
(Caution: some authors define the curvature endomorphism as the negative of ours.)

Being $R$ a tensor, if $x,y,z$ are vectors in $T_{p}Q$, then the value $R(x,y)z$ is independent of the extension of $x,y,z$ and thus well-defined. 

Suppose that $x \in T_{p}Q$ has unit length, i.e., that $\lvert g(x,x) \rvert =1$. Then the \textit{Jacobi operator of $(Q,g)$ with respect to $x$} is the linear map
\begin{align*}
	K_{x} \colon x^{\perp} &\to x^{\perp}\\
	y &\mapsto R(x,y)x.
\end{align*}
Clearly, by the symmetry by pairs of the $(0,4)$-curvature tensor (obtained by lowering the last index of $R$), the operator $K_{x}$ is self-adjoint with respect to $g$.

Next, suppose that $x,y \in T_{p}Q$ are orthonormal. Then the sectional curvature $\mathrm{sec}(x,y)$ of the nondegenerate plane spanned by $x$ and $y$ is given by the formula
\begin{equation*}
	\mathrm{sec}(x,y)= g(R(x,y)y,x)=-g(K_{x}(y),y),
\end{equation*}
where the last equality follows from the skew-symmetry of the $(0,4)$-curvature tensor.

In the Riemannian setting, an important criterion for discerning whether a manifold has constant sectional curvature is provided by the following lemma.

\begin{lemma}[{\cite{cartan1946}, \cite[Lemma~1.17]{dajczer2019}}] \label{RiemLM}
	Suppose that $Q$ is a Riemannian manifold, and that $\dim Q \geq 3$. If, at some point $p \in Q$, the curvature tensor satisfies $g(R(x,y)z, x)=0$ whenever $x,y,z$ are orthonormal, then all sectional curvatures of $Q$ at $p$ are equal.
\end{lemma}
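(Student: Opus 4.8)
The plan is to show that the hypothesis forces the Jacobi operator $K_{x}$ to be a scalar multiple of the identity on $x^{\perp}$ for every unit vector $x \in T_{p}Q$, and then to propagate the resulting scalar across $T_{p}Q$ using the symmetry of sectional curvature together with the dimension hypothesis. Concretely, I would argue in three stages: (a) reduce the hypothesis to $K_{x} = \lambda(x)\,\mathrm{Id}_{x^{\perp}}$; (b) conclude that $\mathrm{sec}(x,y) = -\lambda(x)$ is independent of the second direction $y$; and (c) show that $\lambda$ is a constant function of $x$.

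For stage (a), fix a unit vector $x$ and an orthonormal basis $e_{1}, \dotsc, e_{d-1}$ of $x^{\perp}$, where $d = \dim Q$. For $i \neq j$ the triple $x, e_{i}, e_{j}$ is orthonormal, so the hypothesis gives $g(R(x,e_{i})e_{j}, x) = 0$. By the skew-symmetry of the $(0,4)$-curvature tensor in its last two arguments, $g(R(x,e_{i})e_{j}, x) = -g(R(x,e_{i})x, e_{j}) = -g(K_{x}(e_{i}), e_{j})$; hence the matrix of $K_{x}$ is diagonal in the (arbitrary) orthonormal basis $e_{1}, \dotsc, e_{d-1}$. Since $Q$ is Riemannian, $K_{x}$ is self-adjoint and diagonalizable by an orthonormal eigenbasis $u_{1}, \dotsc, u_{d-1}$; testing the vanishing of the off-diagonal entries on the orthonormal pair $(u_{i}+u_{j})/\sqrt{2}$, $(u_{i}-u_{j})/\sqrt{2}$ yields $g(K_{x}(u_{i}),u_{i}) = g(K_{x}(u_{j}),u_{j})$, so all eigenvalues coincide and $K_{x} = \lambda(x)\,\mathrm{Id}_{x^{\perp}}$. (This is precisely the ``multiple of the identity'' formulation appearing in the main-text lemma.)

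Stage (b) is then immediate: for any unit $y \perp x$ we have $\mathrm{sec}(x,y) = -g(K_{x}(y),y) = -\lambda(x)$, so the sectional curvature of every nondegenerate plane through $x$ equals $-\lambda(x)$. For stage (c), the symmetry $\mathrm{sec}(x,y) = \mathrm{sec}(y,x)$ gives $\lambda(x) = \lambda(y)$ for every orthonormal pair $x, y$. Finally I would invoke $\dim Q \geq 3$: given two unit vectors $x, x'$, if they are linearly independent I choose a unit vector $w$ orthogonal to $\mathrm{span}(x,x')$---which exists precisely because $\dim Q \geq 3$---and chain $\lambda(x) = \lambda(w) = \lambda(x')$; the parallel case is trivial. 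Thus $\lambda$ is constant and all sectional curvatures at $p$ coincide.

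Being a classical result of Cartan, no step presents a genuine obstacle; the two points deserving care are the elementary linear-algebra fact that a self-adjoint operator whose off-diagonal entries vanish in every orthonormal basis is scalar (handled by the $u_{i} \pm u_{j}$ test above), and the role of the dimension hypothesis, which enters solely---but essentially---in the chaining step of stage (c), where connecting two non-orthogonal unit vectors requires a common orthogonal direction.
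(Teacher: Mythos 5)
Your proof is correct, but note that the paper contains no proof of Lemma \ref{RiemLM} to compare against: the result is quoted as classical, with the argument delegated to \cite{cartan1946} and \cite[Lemma~1.17]{dajczer2019}. Your route --- first showing that the hypothesis forces each Jacobi operator $K_{x}$ to have vanishing off-diagonal entries in \emph{every} orthonormal basis of $x^{\perp}$, hence to be scalar (the $u_{i}\pm u_{j}$ rotation test is the right way to see this), then removing the dependence on $x$ via the symmetry $\mathrm{sec}(x,y)=\mathrm{sec}(y,x)$ and a chaining through a common orthogonal direction --- is a clean linear-algebraic rendering of the classical argument, and it dovetails with how the paper actually uses the lemma: in the unnumbered lemma of section \ref{IntroductionMainResult}, the authors perform exactly the converse reduction, deducing $\langle R(x,y)z,x\rangle=-\langle R(x,y)x,z\rangle=0$ for orthonormal $x,y,z$ from $K_{x}$ being a multiple of the identity; your stage (a) inverts that step, so the two together exhibit the hypothesis of Lemma \ref{RiemLM} as equivalent to the scalarity of all Jacobi operators at $p$. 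Two small precisions: your claim that $\dim Q\geq 3$ enters \emph{solely} in stage (c) is slightly off, since the eigenvalue-equality argument of stage (a) also needs two indices $i\neq j$ to exist (though for $\dim Q=2$ the scalarity of $K_{x}$, and indeed the whole lemma, is vacuously true); and it is worth keeping in mind that what you prove is the pointwise statement at $p$, exactly as the lemma asserts --- constancy of the curvature across points is a separate matter, for which the paper invokes Schur's lemma independently.
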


Several generalizations of Lemma~\ref{RiemLM} to the semi-Riemannian setting have appeared~\cite{graves1978, dajczer1980, nomizu1983}. Below we recall the one that is most relevant to our discussion.

\begin{definition}
	Let $x,y \in T_{p}Q$. We say that the pair $(x,y)$ is \textit{orthonormal of signature $(-,+)$} if $g(x,x) =-1$, $g(y,y) =1$, and $g(x,y) =0$.
\end{definition}

\begin{theorem}[{\cite[Theorem~1a]{dajczer1980}}]
	Suppose that $\dim Q \geq 3$. If, at some point $p \in Q$, the curvature tensor satisfies $g(R(x,y)z,x) =0$ whenever $(x,y)$ is orthonormal of signature $(-,+)$ and $g(x,z) = g(y,z) =0$, then all nondegenerate two-planes have the same sectional curvature.
\end{theorem}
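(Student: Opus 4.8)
The plan is to reduce the statement to an algebraic fact about the curvature tensor $R$ at $p$ and to show that the hypothesis forces $R = c\,R_0$ for a single constant $c$, where $R_0(x,y)z = g(y,z)x - g(x,z)y$ is the model curvature tensor; equal sectional curvature on every nondegenerate two-plane is then immediate. To begin, I would recast the hypothesis as an isotropy property of the Jacobi operator. For a timelike unit $x$ and a spacelike unit $y \perp x$, the identity $g(R(x,y)z,x) = -g(K_x(y),z)$ turns the assumption into $K_x(y) \perp \{x,y\}^{\perp}$; since $\Span(x,y)$ is nondegenerate (being spanned by a timelike and a spacelike vector) and $K_x(y) \perp x$ automatically, this forces $K_x(y) \in \Span(y)$. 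Polarizing among spacelike vectors shows the eigenvalue is independent of $y$, and as the spacelike vectors span $x^{\perp}$ I obtain $K_x = \kappa(x)\,\mathrm{id}$ on $x^{\perp}$ for every timelike unit $x$. This is the indefinite counterpart of the isotropy underlying Cartan's Lemma \ref{RiemLM}, available here only along timelike directions.

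The next step is to propagate this to data that the hypothesis does not supply directly. Applying the isotropy along the hyperbola $x_t = \cosh(t)\,x + \sinh(t)\,e$ of timelike unit vectors (with $e$ spacelike unit $\perp x$) and isolating, for a fixed $v \in \{x,e\}^{\perp}$, the terms in $K_{x_t}(v) = \kappa(x_t)\,v$ that must be proportional to $v$, I would extract---using the pair symmetry and the first Bianchi identity---the companion identity $R(x,v)v = \kappa(x)\,x$ for spacelike unit $v \perp x$; equivalently, $R(x,a)a = \kappa(x)\,g(a,a)\,x$ for all $a \in x^{\perp}$, whose polarization reads $R(x,a)b + R(x,b)a = 2\kappa(x)\,g(a,b)\,x$.

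The crux, and the step I expect to be the main obstacle, is to prove that $\kappa$ is \emph{constant}: the hypothesis concerns only $(-,+)$ configurations, so nothing is assumed a priori about spacelike--spacelike planes, and it is precisely here that the dimension hypothesis enters. Choosing a spacelike unit $v \in \{x,e\}^{\perp}$---which exists because $\dim Q \geq 3$---and differentiating $\kappa(x_t) = g\bigl(R(x_t,v)x_t,\,v\bigr)$ along the hyperbola, the polarized identity above reduces the derivative to a multiple of $g(R(e,v)x,\,v)$, which the same identity shows to vanish. Hence $\kappa$ is locally constant on the set of timelike unit vectors; as it is also invariant under $x \mapsto -x$, it equals a single constant $c$.

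Finally, I would set $T = R - c\,R_0$. The previous steps give $T(x,v)x = 0$ for every timelike $x$ and every $v$; since $x \mapsto T(x,v)x$ is, for fixed $v$, a polynomial vanishing on the nonempty open timelike cone, it vanishes identically, so all Jacobi operators of $T$ are zero. Polarizing $T(x,v)x = 0$ in $x$ yields $T(x,v)z + T(z,v)x = 0$, that is, $T$ is skew in its first and third arguments; combined with the curvature symmetries and the first Bianchi identity, this forces $T$ to be totally antisymmetric and hence $T = 0$. Therefore $R = c\,R_0$, and every nondegenerate two-plane has sectional curvature $c$.
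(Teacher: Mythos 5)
The paper does not actually prove this statement---it is quoted from Dajczer--Nomizu \cite[Theorem~1a]{dajczer1980} as a known result---so there is no internal proof to compare against; your proposal must stand on its own, and it essentially does. The architecture is sound and I could verify each step: the hypothesis, via $g(K_{x}(y),z)=-g(R(x,y)z,x)$ and nondegeneracy of the timelike--spacelike plane, does force $K_{x}(y)\in\operatorname{span}(y)$; along the hyperbola $x_{t}=\cosh(t)x+\sinh(t)e$ the functions $1$, $\cosh 2t$, $\sinh 2t$ are linearly independent, so each coefficient of $K_{x_{t}}(v)=\kappa(x_{t})v$ is separately proportional to $v$---in particular the $\sinh^{2}$-coefficient gives the \emph{spacelike} Jacobi proportionality $R(e,v)e\parallel v$, which together with pair symmetry pins down the companion identity $R(x,v)v=\kappa(x)\,g(v,v)\,x$; the polarized identity then kills the derivative $\kappa'(t)\,g(v,v)=2\,g(R(x_{t},v)\dot{x}_{t},v)$, since $R(x_{t},v)\dot{x}_{t}=-R(x_{t},\dot{x}_{t})v$ pairs to zero against $v$; and your final algebraic step is the classical one: $T(x,v)x=0$ on the open timelike cone extends polynomially to all $x$, polarization makes $T$ skew in slots $1$ and $3$, and skewness in $(1,2)$, $(1,3)$, $(3,4)$ makes the $(0,4)$-tensor totally alternating, whence the first Bianchi identity gives $3T=0$.

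Two small repairs are needed for general index (neither arises in the Lorentzian case, which is the one the paper actually invokes in its introductory lemma). First, in the constancy step you assert that a \emph{spacelike} unit $v\in\{x,e\}^{\perp}$ exists because $\dim Q\geq 3$; this fails when the index of $g$ equals $\dim Q-1$ (e.g., signature $(-,-,+)$), where $\{x,e\}^{\perp}$ is negative definite. The fix is one line: any \emph{nonnull} $v\in\{x,e\}^{\perp}$ works after dividing by $g(v,v)$, and such $v$ exists since that subspace is nondegenerate and nonzero. Second, when the index is at least $2$ the subspace $x^{\perp}$ contains timelike and null vectors, so the passage from ``$K_{x}(y)\parallel y$ for spacelike $y$'' to ``$K_{x}=\kappa\,\mathrm{id}$ on $x^{\perp}$,'' and likewise the extension of $R(x,a)a=\kappa(x)\,g(a,a)\,x$ to all $a\in x^{\perp}$, should be justified by the same polynomial-vanishing-on-an-open-set trick you already deploy at the end (spacelike vectors form a nonempty open subset of $x^{\perp}$); similarly your hyperbolas only realize spacelike initial directions on the set of unit timelike vectors, but since $d\kappa_{x}$ is linear on $x^{\perp}$ and spacelike vectors span $x^{\perp}$, vanishing in those directions suffices. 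With these patches the proof is complete; note also that, as in the source, the statement tacitly assumes $g$ indefinite---otherwise the hypothesis is vacuous---and your appeal to a nonempty timelike cone is consistent with that reading.
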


\subsection{Semi-Riemannian groups}
Let $G$ be a Lie group equipped with a left- and right-invariant (i.e., bi-invariant) semi-Riemannian metric $\langle \cdot{,}\cdot \rangle$, and let $\mathfrak{g}$ be its Lie algebra, that is, the Lie algebra of left-invariant vector fields on $G$. As customary, we identify $\mathfrak{g}$ with the tangent space $T_{e}G$ of $G$ at the identity $e$.

Suppose that $X, Y, Z \in \mathfrak{X}(G)$ are left-invariant, i.e., that $X,Y,Z \in \mathfrak{g}$. Then the Levi-Civita connection is given by
\begin{equation}\label{EQ2}
	\nabla_{X}Y = -\nabla_{Y}X= \frac{1}{2}[X,Y]
\end{equation}
and the curvature endomorphism by
\begin{equation} \label{curvatureEQ}
	R(X,Y)Z = \frac{1}{4}[Z,[X,Y]].
\end{equation}
In addition, the following equality holds:
\begin{equation}\label{EQ3}
	\langle [X,Y],Z \rangle = \langle X,[Y,Z] \rangle.
\end{equation}

Suppose that $x,y \in T_{p}G$ are orthonormal; let $x^{\mathrm{L}},y^{\mathrm{L}}$ be their left-invariant extensions. The sectional curvature of the the two-plane spanned by $x$ and $y$ may be computed by
\begin{equation*}
	\mathrm{sec}(x,y) =\frac{1}{4} \langle [x^{\mathrm{L}},y^{\mathrm{L}}], [x^{\mathrm{L}},y^{\mathrm{L}}] \rangle.
\end{equation*}
Note that $\mathrm{sec}(x,y) \geq 0$, with equality if and only if $[x^{\mathrm{L}},y^{\mathrm{L}}]=0$.

Now, let $M$ be a semi-Riemannian submanifold of $G$.

\begin{definition}\label{ClosedNormalBundleDEF}
The normal space $N_{p}M$ is said to be \textit{closed \textup{(}under the Lie bracket\textup{)}} if $dL_{p^{-1}}(N_{p}M)$ is a Lie subalgebra of $\mathfrak{g}$. Consequently, one calls the normal bundle of $M$ \textit{closed \textup{(}under the Lie bracket\textup{)}} if every normal space is closed.
\end{definition}

It is clear that $N_{p}M$ is closed exactly when $\exp(N_{p}M)$ is contained in a Lie subgroup of $G$.

\begin{remark}
Following \cite{terng1995}, the normal space $N_{p}M$ is called \textit{abelian} if $\exp(N_{p}M)$ is contained in a totally geodesic, flat submanifold of $G$. It is easy to see that $N_{p}M$ is abelian if and only if $dL_{p^{-1}}(N_{p}M)$ is an abelian subalgebra of $\mathfrak{g}$.
\end{remark}

\section{The invariant shape operator} \label{ISOP}
In this section we consider the general case of an orientable semi-Riemannian submanifold $M$ of a Lie group $H$ equipped with a left-invariant metric $\langle \cdot {,}\cdot\rangle$. Given a unit normal vector $\eta$ of $M$ at $p$, the \textit{invariant shape operator of $M$ \textup{(}with respect to $\eta$\textup{)}} is the map
\begin{align*}
\alpha \colon T_{p}M &\to T_{p}M \\
v &\mapsto \pi^{\top}\grad_{v}\eta^{\mathrm{L}},
\end{align*}
where, as usual, $\pi^{\top}$ is the orthogonal projection onto $T_{p}M$ and $\eta^{\mathrm{L}}$ the left-invariant extension of $\eta$.

The significance of the invariant shape operator lies in the fact that it represents the deviation of the ordinary shape operator from the differential of the Gauss map of $M$, as we now explain.

Let $N^{1}M$ be the unit normal bundle of $M$, and let $\mathbb{S}^{m+n-1}_{e}$ be the unit sphere inside the Lie algebra $\mathfrak{h}$ of $H$. The \textit{Gauss map of $M$} is the map
\begin{align*}
\mathcal{G}\colon N^{1}M &\to \mathbb{S}^{m+n-1}_{e}\\
(p,\eta) &\mapsto \diff\mleft(L_{p^{-1}}\mright)(\eta);
\end{align*}
here $L_{p^{-1}} \colon T_{p}H \to \mathfrak{h}$ denotes left translation by $p^{-1}$.

Let $N$ be a unit normal vector field along $M$ such that $N_{p} = \eta$, and consider the map $\bar{\mathcal{G}} = \mathcal{G} \circ N$. Its differential at $p$ is a linear map $T_{p}M \to \mathcal{G}(p,\eta)^{\perp}$. Thus, since $\diff(L_{p^{-1}})$ takes $\eta$ to $\mathcal{G}(p,\eta)$ and is an isometry, it follows that
\begin{equation*}
\mathcal{W} = \pi^{\top} \circ \diff\mleft(L_{p}\mright) \circ \diff\bar{\mathcal{G}}
\end{equation*}
is an endomorphism of $T_{p}M$.

\begin{remark}
The Gauss map of a hypersurface in a metric Lie group was first defined by Ripoll in \cite{ripoll1991}. It is worth pointing out that our definition can be extended to the case where the ambient manifold is parallelizable~\cite{ripoll1993}, or even just Killing-parallelizable~\cite{fornari2004}.
\end{remark}

Clearly, if $H=\mathbb{R}^{m+1}$, then $\mathcal{G}$ is the classical Gauss map of $M$, whereas $\mathcal{W}$ its shape operator. In our setting, the following result holds.

\begin{proposition}[cf.\ {\cite[p.~769]{ripoll1993}}]\label{PROP6}
For any $v \in T_{p}M$,
\begin{equation} \label{EQ4}
A(v)  = \alpha(v) + \mathcal{W}(v).
\end{equation}
\end{proposition}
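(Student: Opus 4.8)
The plan is to unwind the definitions of $A$, $\alpha$, and $\mathcal{W}$ and show that the decomposition $A = \alpha + \mathcal{W}$ is essentially a tautology once the Gauss map is understood correctly. The key observation is that $A(v) = \pi^{\top}\nabla_{v} N$ differentiates the \emph{given} normal field $N$, whereas $\alpha(v) = \pi^{\top}\nabla_{v}\eta^{\mathrm{L}}$ differentiates the \emph{left-invariant} extension of $\eta = N_p$. These two extensions of $\eta$ agree at $p$ but differ nearby; the correction term $\mathcal{W}$ should exactly capture how $N$ deviates from $\eta^{\mathrm{L}}$, as measured through the Gauss map.

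First I would compute $\nabla_v N$ by writing $N = \bar{N}^{\mathrm{L}}$ locally, where at each point $q$ near $p$ the vector $N_q$ is the left-invariant extension of $\mathcal{G}(q,N_q) = \diff(L_{q^{-1}})(N_q) \in \mathfrak{h}$; more precisely, I would relate the covariant derivative of $N$ to the covariant derivative of $\eta^{\mathrm{L}}$ plus a term coming from the variation of $\mathcal{G}\circ N = \bar{\mathcal{G}}$. The cleanest route is to note that, since $\diff(L_{p^{-1}})$ is an isometry sending $\eta$ to $\mathcal{G}(p,\eta)$, the map $\bar{\mathcal{G}}$ records precisely the $\mathfrak{h}$-component of $N$ as seen from the identity, and its differential $\diff\bar{\mathcal{G}} \colon T_pM \to \mathcal{G}(p,\eta)^{\perp}$ measures the infinitesimal rotation of $N$ within the fibre. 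Pushing this back by $\diff(L_p)$ and projecting tangentially yields $\mathcal{W}(v)$, which should account for exactly the discrepancy $\pi^{\top}\nabla_v N - \pi^{\top}\nabla_v \eta^{\mathrm{L}}$.

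Concretely, the step I expect to carry the real content is relating the covariant derivative $\nabla_v N$ to the ordinary derivative of the $\mathfrak{h}$-valued function $\bar{\mathcal{G}}$ together with the left-invariant term. I would exploit that, for a left-invariant field $\eta^{\mathrm{L}}$, its covariant derivative is governed by \eqref{EQ2}, while for a general field $N$ one must add the derivative of the coefficient functions expressing $N$ in a left-invariant frame. Writing $N = \langle \bar{\mathcal{G}}, E_i\rangle\, E_i^{\mathrm{L}}$ in terms of a fixed orthonormal basis $E_i$ of $\mathfrak{h}$ (so that the coefficients are precisely the components of $\bar{\mathcal{G}}$), differentiating along $v$ splits $\nabla_v N$ into a ``coefficient'' part $v(\langle \bar{\mathcal{G}}, E_i\rangle)\, E_i^{\mathrm{L}} = \diff(L_p)(\diff\bar{\mathcal{G}}(v))$ and a ``frame'' part $\langle \bar{\mathcal{G}}, E_i\rangle \nabla_v E_i^{\mathrm{L}} = \nabla_v \eta^{\mathrm{L}}$ evaluated at $p$ (using $\bar{\mathcal{G}}(p) = \mathcal{G}(p,\eta)$, whose left-invariant reconstruction is $\eta^{\mathrm{L}}$). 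Applying $\pi^{\top}$ to this splitting gives $A(v) = \mathcal{W}(v) + \alpha(v)$ directly.

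The main obstacle will be bookkeeping the identification of the fibre of $\bar{\mathcal{G}}$ with $\mathfrak{h}$ and verifying that the isometry property of $\diff(L_{p^{-1}})$ makes the projection $\pi^{\top}$ commute appropriately with $\diff(L_p)$ in the definition of $\mathcal{W}$; in particular, one must check that the ``coefficient'' term is genuinely $\diff(L_p)\big(\diff\bar{\mathcal{G}}(v)\big)$ and not merely proportional to it, which rests on the fact that the $E_i^{\mathrm{L}}$ form a left-invariant orthonormal frame so that the coefficient functions of $N$ coincide with the components of $\bar{\mathcal{G}}$. Once this identification is pinned down, the decomposition is immediate; the appeal to \cite[p.~769]{ripoll1993} confirms that this is the expected generalization of the classical hypersurface computation.
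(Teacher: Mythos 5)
Your proposal is correct and takes essentially the same route as the paper's proof: expand $N$ in a left-invariant orthonormal frame, split $\nabla_{v}N$ by the product rule into the coefficient-derivative part, identified with $\diff(L_{p})(\diff\bar{\mathcal{G}}(v))$, and the frame part $\nabla_{v}\eta^{\mathrm{L}}$, then apply $\pi^{\top}$. The only cosmetic difference is that the paper adapts the frame to $T_{p}M$ and $\eta$ (so the coefficients at $p$ are $0,\dotsc,0,1$ and the frame term is visibly $\alpha(v)$), whereas you use a fixed basis of $\mathfrak{h}$ and invoke linearity of $\nabla_{v}$ over the constant coefficients $\bar{\mathcal{G}}(p)$ to the same effect.
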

\begin{proof}
Let $(b_{1}, \dotsc, b_{m+n})$ be an orthonormal basis of $T_{p}H$ such that $b_{1},\dotsc,b_{m} \in T_{p}M$ and $b_{m+n}=\eta$. For each $i$, let $b^{\mathrm{L}}_{i}$ be the left-invariant extension of $b_{i}$, so that $(b^{\mathrm{L}}_{1}, \dotsc, b^{\mathrm{L}}_{m+n}= \eta^{\mathrm{L}})$ is an orthonormal frame for $H$ (and a basis of $T_{e}H$).

If $q \in M$---writing $N^{i}$ as a shorthand for $\langle N, b^{\mathrm{L}}_{i} \rangle$---then
\begin{equation*}
\bar{\mathcal{G}}(q) = \diff\mleft(L_{q^{-1}}\mright)\mleft(N_{q}\mright) = \diff\mleft(L_{q^{-1}}\mright) \bigl( N^{i}(q) \mleft. b^{\mathrm{L}}_{i} \mright\rvert_{q}\bigr)=N^{i}(q) \mleft. b^{\mathrm{L}}_{i}\mright \rvert_{e}.
\end{equation*}
Thus, if $v \in T_{p}M$, then
\begin{equation*}
\diff\bar{\mathcal{G}}(v) = dN^{i}(v) \mleft. b^{\mathrm{L}}_{i} \mright \rvert_{e} = v (N^{i}) \mleft. b^{\mathrm{L}}_{i} \mright \rvert_{e}.
\end{equation*}
Since
\begin{equation*}
\diff\mleft(L_{p}\mright)(\diff\bar{\mathcal{G}}(v))=v (N^{i} ) b_{i},
\end{equation*}
it follows that
\begin{equation}\label{EQ5}
\pi^{\top}\diff\mleft(L_{p}\mright)(\diff\bar{\mathcal{G}}(v))=v(N^{j}) b_{j}.
\end{equation}

On the other hand,
\begin{align*}
A(v) &= \pi^{\top}\grad_{v}N^{i}b^{\mathrm{L}}_{i}\\
&=\pi^{\top} \mleft( N^{i}(p)\grad_{v}b^{\mathrm{L}}_{i} + v(N^{i}) b_{i}\mright).
\end{align*}
Since, by construction, $N^{1}(p)= \dotsb =N^{m+n-1}(p) =0$ and $N^{m+n}(p)=1$, we have
\begin{equation} \label{EQ6}
A(v) =\alpha(v) + v(N^{j}) b_{j},
\end{equation}
which, together with \eqref{EQ5}, gives \eqref{EQ4}.
\end{proof}

\begin{remark}
Proposition~\ref{PROP6} shows that $\mathcal{W}$ does not depend on the particular choice of normal vector field $N$ but only on its value at $p$.
\end{remark}

\begin{remark}
Using equation \eqref{EQ6}, it is not difficult to see that statement \ref{item2} in Theorem~\ref{TH1} is nothing but the coordinate expression, with respect to the frame $(e^{\mathrm{L}}_{1}, \dotsc, e^{\mathrm{L}}_{m})$, of the condition
\begin{equation} \label{EQ7}
\pi_{j} \Ima \mathcal{W} \rvert_{\Lambda_{h}} = 0 \quad \text{for all $j,h = 1, \dotsc, m$ such that $\lambda_{j} \neq \lambda_{h}$},
\end{equation}
where $\Lambda_{h}$ is the eigenspace of $K$ corresponding to the eigenvalue $\lambda_{h}$, and where $\pi_{j}$ is the orthogonal projection onto $\Lambda_{j}$. Note that \eqref{EQ7} holds if and only if $\mathcal{W}$ leaves the eigenspaces of $K$ invariant.
\end{remark}
%

A useful property of the invariant shape operator, which is crucial in proving Theorem~\ref{TH1}, is contained in the following proposition.

\begin{proposition}\label{PROP9}
If $\langle \cdot{,} \cdot \rangle$ is bi-invariant and the normal space $N_{p}M$ is closed, then
\begin{enumerate}[font=\upshape]
\item $K=\alpha\circ\alpha$, and so $\alpha$ and $K$ commute;
\item $K$ leaves $T_{p}M$ invariant.
\end{enumerate}
\end{proposition}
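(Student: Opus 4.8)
The plan is to transport the entire problem to the Lie algebra $\mathfrak{g} = T_{e}G$ by means of the left translation $L_{p^{-1}}$, which is an isometry because the metric is left-invariant. Writing $\bar{x} = \diff L_{p^{-1}}(x)$ for $x \in T_{p}G$, I would set $W = \diff L_{p^{-1}}(T_{p}M)$, $\mathfrak{n} = \diff L_{p^{-1}}(N_{p}M)$, and $\bar{\eta} = \diff L_{p^{-1}}(\eta)$; closedness of $N_{p}M$ means precisely that $\mathfrak{n}$ is a Lie subalgebra of $\mathfrak{g}$, and $\mathfrak{g} = W \oplus \mathfrak{n}$ is a nondegenerate orthogonal decomposition with $\bar{\eta} \in \mathfrak{n}$ of unit length. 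Since $\nabla$ depends tensorially on its lower slot, I may replace $v$ by the left-invariant extension $v^{\mathrm{L}}$ and invoke \eqref{EQ2} and \eqref{curvatureEQ}; pushing forward by $\diff L_{p^{-1}}$ then yields the Lie-algebraic models
\begin{equation*}
\tilde{\alpha}(\bar{v}) = \tfrac{1}{2}\pi_{W}[\bar{v}, \bar{\eta}], \qquad \tilde{K}(\bar{v}) = \tfrac{1}{4}\bigl[\bar{\eta},[\bar{\eta},\bar{v}]\bigr],
\end{equation*}
where $\pi_{W}$ is orthogonal projection onto $W$ and $\tilde{\alpha}, \tilde{K}$ correspond to $\alpha, K$ under the isometry.

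The crux of the argument is the inclusion $[W, \mathfrak{n}] \subseteq W$. To prove it, I would fix $\bar{v} \in W$ and $\bar{n} \in \mathfrak{n}$ and test $[\bar{v}, \bar{n}]$ against an arbitrary $\bar{w} \in \mathfrak{n}$: by the ad-invariance \eqref{EQ3} of the bi-invariant metric,
\begin{equation*}
\langle [\bar{v}, \bar{n}], \bar{w} \rangle = \langle \bar{v}, [\bar{n}, \bar{w}] \rangle,
\end{equation*}
and the right-hand side vanishes because $[\bar{n}, \bar{w}] \in \mathfrak{n}$ (here closedness is used) while $\bar{v} \perp \mathfrak{n}$. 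Thus $[\bar{v}, \bar{n}]$ is orthogonal to $\mathfrak{n}$, and since $\mathfrak{n}$ is nondegenerate (being the normal space of a semi-Riemannian submanifold) this forces $[\bar{v}, \bar{n}] \in W$. I expect this step---where bi-invariance and closedness combine to keep the bracket tangent---to be the true content of the proposition; everything else is formal.

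Granting the inclusion, the projection in $\tilde{\alpha}$ becomes redundant: since $\bar{\eta} \in \mathfrak{n}$, we get $\tilde{\alpha}(\bar{v}) = \tfrac{1}{2}[\bar{v},\bar{\eta}]$, and this value lies in $W$, so $\tilde{\alpha}$ is genuinely an endomorphism of $W$. Iterating and using antisymmetry of the bracket,
\begin{equation*}
\tilde{\alpha}(\tilde{\alpha}(\bar{v})) = \tfrac{1}{4}\bigl[[\bar{v},\bar{\eta}],\bar{\eta}\bigr] = \tfrac{1}{4}\bigl[\bar{\eta},[\bar{\eta},\bar{v}]\bigr] = \tilde{K}(\bar{v}),
\end{equation*}
which establishes $K = \alpha \circ \alpha$; commutativity of $\alpha$ and $K$ is then immediate, as $\alpha \circ K = \alpha^{3} = K \circ \alpha$. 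Finally, part (2) requires no separate argument: since $\tilde{K} = \tilde{\alpha} \circ \tilde{\alpha}$ and $\tilde{\alpha}(W) \subseteq W$, the operator $\tilde{K}$ maps $W$ into $W$, that is, $K$ leaves $T_{p}M$ invariant.
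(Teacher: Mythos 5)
Your proposal is correct and follows essentially the same route as the paper: your inclusion $[W,\mathfrak{n}] \subseteq W$, proved via the ad-invariance identity \eqref{EQ3} together with closedness and nondegeneracy of the normal space, is exactly the content of the paper's Lemma \ref{LM10} (that $\alpha(v) = \nabla_{v}\eta^{\mathrm{L}}$ with no projection needed), merely transported to $\mathfrak{g}$ by $\diff L_{p^{-1}}$. The concluding computation $\tilde{\alpha}^{2}(\bar{v}) = \tfrac{1}{4}\bigl[\bar{\eta},[\bar{\eta},\bar{v}]\bigr] = \tilde{K}(\bar{v})$ via \eqref{EQ2} and \eqref{curvatureEQ} matches the paper's $K(v) = \nabla_{\nabla_{v}\eta^{\mathrm{L}}}\eta^{\mathrm{L}}$ step, and your observation that part (2) follows formally from $K = \alpha \circ \alpha$ agrees with the paper's remark that the second assertion is a direct consequence of the first.
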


The proof will be based on a lemma.

\begin{lemma} \label{LM10}
Under the hypotheses of Proposition~\ref{PROP9}, $\alpha(v)=\nabla_{v}\eta^{\mathrm{L}}$.
\end{lemma}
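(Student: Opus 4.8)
The plan is to show that $\nabla_{v}\eta^{\mathrm{L}}$ is already tangent to $M$ at $p$, so that the orthogonal projection $\pi^{\top}$ appearing in the definition of $\alpha$ acts as the identity on it. The first step is to replace $v$ by its left-invariant extension: since the connection is tensorial in its lower argument, $\nabla_{v}\eta^{\mathrm{L}} = \left(\nabla_{v^{\mathrm{L}}}\eta^{\mathrm{L}}\right)\big\rvert_{p}$, and now both fields are left-invariant, so \eqref{EQ2} yields $\nabla_{v}\eta^{\mathrm{L}} = \tfrac{1}{2}\left[v^{\mathrm{L}},\eta^{\mathrm{L}}\right]\big\rvert_{p}$. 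Tangency of this vector amounts to the vanishing of $\left\langle \nabla_{v}\eta^{\mathrm{L}}, w \right\rangle$ for every $w \in N_{p}M$, which is what I would verify.

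To that end I would fix $w \in N_{p}M$, denote by $w^{\mathrm{L}}$ its left-invariant extension, and apply the bi-invariance identity \eqref{EQ3} to move the bracket off $\eta^{\mathrm{L}}$: evaluated at $p$, $\left\langle \left[v^{\mathrm{L}},\eta^{\mathrm{L}}\right], w^{\mathrm{L}} \right\rangle = \left\langle v^{\mathrm{L}}, \left[\eta^{\mathrm{L}}, w^{\mathrm{L}}\right] \right\rangle$. The key observation is then that $\left[\eta^{\mathrm{L}}, w^{\mathrm{L}}\right]\big\rvert_{p}$ is itself normal. Indeed, $\left[\eta^{\mathrm{L}}, w^{\mathrm{L}}\right]$ is a left-invariant field whose value at the identity is the Lie bracket of $dL_{p^{-1}}(\eta)$ and $dL_{p^{-1}}(w)$; since the normal space is closed, both of these lie in the subalgebra $dL_{p^{-1}}(N_{p}M)$, hence so does their bracket, and translating back by $dL_{p}$ returns an element of $N_{p}M$. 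As $v \in T_{p}M$ and $\left[\eta^{\mathrm{L}}, w^{\mathrm{L}}\right]\big\rvert_{p} \in N_{p}M$, the orthogonality of $T_{p}M$ and $N_{p}M$ forces $\left\langle v, \left[\eta^{\mathrm{L}}, w^{\mathrm{L}}\right]\big\rvert_{p} \right\rangle = 0$. This gives the desired vanishing and hence the claim $\alpha(v) = \nabla_{v}\eta^{\mathrm{L}}$.

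There is no genuine difficulty in this argument, but the step deserving the most care is the bookkeeping in the third step: one must keep track of the identification of $N_{p}M$ with the subalgebra $dL_{p^{-1}}(N_{p}M)$, remembering that the closedness hypothesis is a statement about the translated space at the identity, and then confirm that the left-invariant bracket, once re-evaluated at $p$ via $dL_{p}$, indeed lands back in $N_{p}M$. Everything else is a direct application of \eqref{EQ2} and \eqref{EQ3} together with the tensoriality of $\nabla$ in its first slot.
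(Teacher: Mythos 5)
Your proof is correct and follows essentially the same route as the paper's: compute $\nabla_{v}\eta^{\mathrm{L}} = \tfrac{1}{2}\left[v^{\mathrm{L}},\eta^{\mathrm{L}}\right]\big\rvert_{p}$ via \eqref{EQ2}, use closedness of $dL_{p^{-1}}(N_{p}M)$ to see that $\left[\eta^{\mathrm{L}},\xi^{\mathrm{L}}\right]$ is normal at $p$, and then invoke the ad-invariance identity \eqref{EQ3} to conclude tangency, so that $\pi^{\top}$ acts trivially. Your version merely spells out the bookkeeping (tensoriality in the lower slot, translating the bracket back from $e$ to $p$) that the paper leaves implicit.
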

\begin{proof}
Let $\xi$ be a unit normal vector at $p$. Since $N_{p}M$ is closed,
\begin{equation*}
[\eta^{\mathrm{L}},\xi^{\mathrm{L}}] \in \diff L_{p^{-1}}(N_{p}M),
\end{equation*}
under the usual identification of $\mathfrak{g}$ with $T_{e}G$. By the bi-invariance of the metric, it follows that
\begin{equation*}
\mleft\langle \mleft[ v^{\mathrm{L}}, \eta^{\mathrm{L}} \mright], \xi^{\mathrm{L}} \mright\rangle = \mleft\langle v^{\mathrm{L}}, \mleft[\eta^{\mathrm{L}}, \xi^{\mathrm{L}} \mright]\mright\rangle=0,
\end{equation*}
which implies $1/2[v^{\mathrm{L}},\eta^{\mathrm{L}}]_{p}=\nabla_{v}\eta^{\mathrm{L}} \in T_{p}M$, and so $\alpha(v)= \pi^{\top}\grad_{v}\eta^{\mathrm{L}}= \nabla_{v}\eta^{\mathrm{L}}$.
\end{proof}

\begin{remark}
The converse of Lemma~\ref{LM10} holds: if the metric is bi-invariant and $\alpha(v) = \nabla_{v}\eta^{\mathrm{L}}$ for all $(v,\eta) \in T_{p}M\times N_{p}M$, then $N_{p}M$ is closed. In other words, given a nondegenerate subspace $S$ of $\mathfrak{g}$, the orthogonal complement $S^{\perp}$ of $S$ is closed under the Lie bracket if and only if $[S, S^{\perp}] \subset S$.
\end{remark}

\begin{proof}[Proof of Proposition~\ref{PROP9}]
Clearly, being the second assertion in the proposition a direct consequence of the first, we only need to prove the latter.

Let $v \in T_{p}M$. Since $K$ is tensorial, the value $K(v)$ may be computed in terms of the left-invariant extensions $v^{\mathrm{L}}$ and $\eta^{\mathrm{L}}$ of $v$ and $\eta$:
\begin{equation*}
K(v) = R(\eta^{\mathrm{L}}, v^{\mathrm{L}})\eta^{\mathrm{L}}.
\end{equation*}

Assume that the metric is bi-invariant. Then, using \eqref{EQ2} and \eqref{curvatureEQ}, we have
\begin{align*}
K(v) &= \frac{1}{4} \mleft[ \eta^{\mathrm{L}}, \mleft[\eta^{\mathrm{L}}, v^{\mathrm{L}} \mright] \mright]\\
&= \nabla_{\nabla_{v}\eta^{\mathrm{L}} }\eta^{\mathrm{L}}.
\end{align*}
From here the statement follows directly from Lemma~\ref{LM10}.
\end{proof}

\begin{corollary}\label{COR11}
Suppose that the induced semi-Riemannian metric on $M$ is positive definite at $p \in M$. Then, under the hypotheses of Proposition~\ref{PROP9}, the nonzero eigenvalues of $K$ are negative and have even multiplicities.
\end{corollary}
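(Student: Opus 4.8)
The plan is to read off the spectrum of $K$ directly from the factorization $K = \alpha \circ \alpha$ supplied by Proposition \ref{PROP9}. Since the induced metric is assumed positive definite at $p$, the tangent space $(T_pM, \langle \cdot {,} \cdot \rangle)$ is a Euclidean space, and the entire statement reduces to a question about the square of the endomorphism $\alpha$ of that space. Everything will then follow once I determine the adjointness type of $\alpha$: over a Euclidean space, the eigenvalue structure of $\alpha^2$ is completely dictated by whether $\alpha$ is self-adjoint or skew-adjoint.

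First I would check that, under the hypotheses of Proposition \ref{PROP9}, the operator $\alpha$ is \emph{skew}-adjoint. By Lemma \ref{LM10} we have $\alpha(v) = \nabla_v \eta^{\mathrm{L}} = \tfrac{1}{2}[v^{\mathrm{L}}, \eta^{\mathrm{L}}]_p$, so for $v, w \in T_pM$ the ad-invariance identity \eqref{EQ3} gives
\begin{equation*}
\langle \alpha(v), w \rangle = \tfrac{1}{2}\langle [v^{\mathrm{L}}, \eta^{\mathrm{L}}], w^{\mathrm{L}} \rangle = \tfrac{1}{2}\langle v^{\mathrm{L}}, [\eta^{\mathrm{L}}, w^{\mathrm{L}}] \rangle = -\langle v, \alpha(w) \rangle,
\end{equation*}
where the last equality uses $[\eta^{\mathrm{L}}, w^{\mathrm{L}}] = -[w^{\mathrm{L}}, \eta^{\mathrm{L}}] = -2\alpha(w)$. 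Hence $\alpha$ is a skew-symmetric endomorphism of the Euclidean space $T_pM$.

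The remaining step is pure linear algebra. A real skew-symmetric operator on a Euclidean space has purely imaginary spectrum and, being real, its nonzero eigenvalues occur in conjugate pairs $\pm i b$ (with $b > 0$) of equal multiplicity. Consequently $K = \alpha^2$ acquires, for each such pair, the single eigenvalue $-b^2 < 0$, whose multiplicity is the sum of the two equal multiplicities of $ib$ and $-ib$, hence even. This yields both assertions at once: every nonzero eigenvalue of $K$ is negative and of even multiplicity. (Equivalently, one may invoke the canonical decomposition of a skew-symmetric matrix into $2 \times 2$ rotational blocks and a kernel, each nontrivial block squaring to $-b^2 I_2$.) The only genuine obstacle is the identification of $\alpha$ as skew-adjoint, which hinges essentially on Lemma \ref{LM10}; once this is secured, positive definiteness makes the spectral structure of $\alpha$ rigid enough to force the even multiplicities. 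It is worth emphasizing that positive definiteness is indispensable here, since in an indefinite setting a skew-adjoint operator need not be semisimple and the pairing of eigenvalues may break down.
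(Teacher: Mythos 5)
Your proof is correct and takes essentially the same route as the paper: both establish that $\alpha$ is skew-adjoint with respect to the metric (the paper deduces this directly from equations \eqref{EQ2} and \eqref{EQ3}; note that, contrary to your closing remark, skew-adjointness does not actually require Lemma \ref{LM10}, which only removes the tangential projection) and then read off the spectrum of $K = \alpha \circ \alpha$ as the square of a real skew-symmetric operator on a Euclidean space. The only difference is that where you prove the spectral fact inline via the canonical $2 \times 2$ block decomposition, the paper simply cites \cite[Theorem 2]{rinehart1960}.
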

\begin{proof}
We deduce from equations \eqref{EQ2} and \eqref{EQ3} that the invariant shape operator $\alpha$ of $G$ is skew-adjoint with respect to the semi-Riemannian metric. On the other hand, $K$ is self-adjoint. Thus, if the induced metric on $M$ is positive definite at $p$ and $K(T_{p}M) \subset T_{p}M$, then the matrices of $K$ and $\alpha$ with respect to any orthonormal basis of $T_{p}M$ are symmetric and skew-symmetric, respectively. Since $K =\alpha \circ \alpha$ when the hypotheses of Proposition~\ref{PROP9} are fulfilled, the statement follows from \cite[Theorem~2]{rinehart1960}.
\end{proof}

\begin{remark}
Dropping the assumption that the metric is positive definite, the matrix of $\alpha$ in any orthonormal basis $(b_{1}, \dotsc, b_{m})$ of $T_{p}M$ becomes
\begin{equation*}
\begin{pmatrix}
0 & -\mfrac{\langle \alpha(b_{1}), b_{2}\rangle}{\langle b_{1}, b_{1} \rangle} & \cdots & -\mfrac{\langle \alpha(b_{1}), b_{m}\rangle}{\langle b_{1}, b_{1} \rangle}\\[10pt]
\mfrac{\langle \alpha(b_{1}), b_{2}\rangle}{\langle b_{2}, b_{2} \rangle} & 0 & \cdots & -\mfrac{\langle \alpha(b_{2}), b_{m}\rangle}{\langle b_{2}, b_{2} \rangle}\\
\vdots & \vdots & \ddots & \vdots\\
\mfrac{\langle \alpha(b_{1}), b_{m}\rangle}{\langle b_{m}, b_{m} \rangle} & \mfrac{\langle \alpha(b_{2}), b_{m}\rangle}{\langle b_{m}, b_{m} \rangle} & \cdots & 0
\end{pmatrix}.
\end{equation*}
Hence, when $m=2$, the operator $\alpha \circ \alpha$ is a multiple of the identity regardless of the signature of the metric.
\end{remark}

\section{Proof of the main result} \label{PROOF}
We are now ready to prove Theorem~\ref{TH1} and Proposition~\ref{PROP4}.

\begin{proof}[Proof of Theorem~\ref{TH1}]
It follows from equation \eqref{EQ6} that
\begin{equation*}
A(e_{j})= \alpha(e_{j}) + \sum_{h=1}^{m}e_{j}(\langle N,e^{\mathrm{L}}_{h}\rangle)e_{h}.
\end{equation*}
Hence, by linearity of $K$, we have
\begin{equation*}
K(A(e_{j})) = K(\alpha(e_{j})) + \sum_{h=1}^{m} e_{j}(\langle N,e^{\mathrm{L}}_{h}\rangle)K(e_{h}),
\end{equation*}
whereas
\begin{equation*}
A(K(e_{j})) = \alpha(K(e_{j})) + \sum_{h=1}^{m} K(e_{j})(\langle N,e^{\mathrm{L}}_{h}\rangle)e_{h}.
\end{equation*}

Assume that $N_{p}M$ is closed; this way, since $G$ is equipped with a bi-invariant metric, $K$ and $\alpha$ commute by Proposition~\ref{PROP9}. It follows that $K(A(e_{j})) = A(K(e_{j}))$ exactly when
\begin{equation*}
\sum_{h=1}^{m} \lambda_{h} e_{j}(\langle N,e^{\mathrm{L}}_{h}\rangle)e_{h} = \sum_{h=1}^{m} \lambda_{j} e_{j}(\langle N,e^{\mathrm{L}}_{h}\rangle)e_{h}.
\end{equation*}
Being $(e_{1}, \dotsc, e_{m})$ a basis of $T_{p}M$, we conclude that $A$ and $K$ commute if and only if $e_{j}(\langle N, e^{\mathrm{L}}_{h}\rangle) = 0$ for all $j$ and $h$ such that $\lambda_{j} \neq \lambda_{h}$.

It remains to show that $e_{j}(\langle N, e^{\mathrm{L}}_{h} \rangle) = e_{h}(\langle N, e^{\mathrm{L}}_{j} \rangle)$ when $\lambda_{j} \neq \lambda_{h}$ and the induced metric on $M$ is positive definite at $p$. To this end, identify $\alpha$, $A$, and $K$ with their matrices in the basis $(e_{j})_{j=1}^{m}$. The first is a skew-symmetric matrix, by equation \eqref{EQ3}, whereas $A$ is symmetric and $K$ diagonal. Since $\alpha$ and $K$ commute, the $(j,h)$-entries of $\alpha K$ and $K\alpha$ are equal, and so we must have $\alpha_{jh}\lambda_{h}= \lambda_{j}\alpha_{jh}$.

Assume $\lambda_{j}\neq\lambda_{h}$. Then $\alpha_{jh}=-\alpha_{hj}=0$ and so, by equation \eqref{EQ4},
\begin{align*}
A_{jh}&= \langle \mathcal{W}(e_{j}),e_{h}\rangle,\\
A_{hj}&= \langle \mathcal{W}(e_{h}),e_{j}\rangle,
\end{align*}
implying $\langle \mathcal{W}(e_{j}),e_{h}\rangle = \langle \mathcal{W}(e_{h}),e_{j}\rangle$ by symmetry of $A$.
\end{proof}

\begin{proof}[Proof of Proposition~\ref{PROP4}]
Suppose that $M$ is a three-dimensional Riemannian submanifold with closed normal bundle, and suppose that $\alpha \neq 0$. It follows by Corollary~\ref{COR11} that $K$ has one zero eigenvalue, while the remaining two are equal. Without loss of generality, we may assume that $\lambda_{3}=0$. Since $K \neq 0$, it is clear that $\lambda_{1}=\lambda_{2}\neq0$.

Extend $\eta$ to a unit normal vector field $N$ along $M$. Then, by continuity, the multiplicity of $\lambda_{3}$ is locally constant, i.e., there exists a neighborhood $U=U(N)$ of $p$ in $M$ such that the extension of $K$ has two negative definite eigenvalues in $U$.

Assume that $A$ and $K$ commute, i.e., they share a common basis of eigenvectors. Since the $0$-eigenspace of $K$ is one-dimensional, it follows that $e_{3}$ is an eigenvector of $A$. Conversely, if $e_{3}$ is an eigenvector of $A$, then its other two eigenvectors lie in the $\lambda_{1}$-eigenspace of $K$, from which we infer that $A$ and $K$ commute.
%
\end{proof}

\appendix
\section{} \label{APPA}
Here we present a direct proof of the following obvious corollary of Theorem~\ref{TH1}.

\begin{corollary}
Suppose that $N_{p}M$ is closed. If the shape operator of $M$ with respect to $\eta$ is a multiple of the identity, then $e_{j}(\langle N, e^{\mathrm{L}}_{h} \rangle) = 0$ for all $e_{j}, e_{h}$ in different eigenspaces.
\end{corollary}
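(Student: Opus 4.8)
The plan is to extract the desired vanishing directly from the decomposition of the shape operator in equation~\eqref{EQ6}, using the commutativity of $\alpha$ and $K$ furnished by Proposition~\ref{PROP9}. First I would write the umbilical hypothesis as $A = c\,\mathrm{id}$ for some scalar $c$ and work in the eigenbasis $(e_{1}, \dotsc, e_{m})$ of $K$. Expanding $\alpha(e_{j}) = \sum_{i=1}^{m}\alpha_{ij}e_{i}$, equation~\eqref{EQ6} applied to $v = e_{j}$ becomes
\[
c\,e_{j} = \sum_{i=1}^{m}\alpha_{ij}e_{i} + \sum_{h=1}^{m} e_{j}\bigl(\langle N, e^{\mathrm{L}}_{h}\rangle\bigr)e_{h}.
\]
Comparing the coefficient of $e_{h}$ for $h \neq j$ then yields
\[
e_{j}\bigl(\langle N, e^{\mathrm{L}}_{h}\rangle\bigr) = -\alpha_{hj},
\]
so that the problem reduces to showing that the off-diagonal entries of $\alpha$ linking distinct eigenvalues vanish.

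The second step is to invoke closedness of $N_{p}M$. Since $G$ carries a bi-invariant metric, Proposition~\ref{PROP9} gives $K = \alpha \circ \alpha$; in particular $\alpha$ commutes with $K$. Identifying $\alpha$ and $K$ with their matrices in the basis $(e_{j})_{j=1}^{m}$, with $K$ diagonal, the $(h,j)$-entry of the identity $\alpha K = K\alpha$ reads $\alpha_{hj}\lambda_{j} = \lambda_{h}\alpha_{hj}$, i.e.\ $(\lambda_{j} - \lambda_{h})\alpha_{hj} = 0$. Hence $\alpha_{hj} = 0$ whenever $\lambda_{j} \neq \lambda_{h}$, and combining this with the previous display gives the conclusion $e_{j}(\langle N, e^{\mathrm{L}}_{h}\rangle) = 0$.

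I do not expect any real obstacle here: the entire content is already packaged in Proposition~\ref{PROP9}, which collapses the argument to the elementary fact that an operator commuting with a diagonal one cannot connect distinct eigenspaces. The only point requiring a little care is the index convention when reading coefficients off~\eqref{EQ6} — namely that $\alpha_{hj}$ is the $e_{h}$-component of $\alpha(e_{j})$. It is worth noting that, because the hypothesis $A = c\,\mathrm{id}$ already forces the whole off-diagonal part of $A$ to vanish, this direct argument needs neither positive-definiteness of the induced metric nor the symmetry of $A$ that were used to handle the general case in the proof of Theorem~\ref{TH1}.
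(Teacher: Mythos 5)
Your proposal is correct and follows essentially the same route as the paper's own proof: both reduce the claim via equation~\eqref{EQ6} (with $A$ a multiple of the identity killing the off-diagonal part) to the vanishing of the entries $\alpha_{hj}$ with $\lambda_{j}\neq\lambda_{h}$, which follows from the commutativity $\alpha K = K\alpha$ supplied by Proposition~\ref{PROP9}. The only (cosmetic) difference is that you compare coefficients in the basis $(e_{j})$ where the paper pairs against $e_{h}$ using orthogonality of the eigenspaces of $K$ — your version conveniently sidesteps the $\pm$ sign coming from the signature of the induced metric, but the argument is the same.
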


\begin{proof}
Assume the hypotheses of the corollary. Since $\alpha$ commutes with $K$, each eigenspace of $K$ is invariant under $\alpha$. Moreover, being $A$ a multiple of the identity, $\langle A(e_{j}), e_{h} \rangle =0$ for $j \neq h$, and so equation \eqref{EQ6} implies 
\begin{equation*}
\langle \alpha(e_{j}), e_{h} \rangle = \pm e_{j}(\langle N,e^{\mathrm{L}}_{h}\rangle)\quad \text{for $j \neq h$}.
\end{equation*}

Clearly, if $\lambda_{j} \neq \lambda_{h}$, then $e_{j}(\langle N,e^{\mathrm{L}}_{h}\rangle)=0$, because $\alpha(e_{j})$ and $e_{h}$ are in different (orthogonal) eigenspaces of $K$.
\end{proof}

\section*{Acknowledgments}
We thank Marcos Salvai and the anonymous referees for their constructive comments, which contributed to improve the quality of the paper.

\bibliographystyle{amsplain}
\bibliography{biblio}

\providecommand{\bysame}{\leavevmode\hbox to3em{\hrulefill}\thinspace}
\providecommand{\MR}{\relax\ifhmode\unskip\space\fi MR }
\providecommand{\MRhref}[2]{%
  \href{http://www.ams.org/mathscinet-getitem?mr=#1}{#2}
}
\providecommand{\href}[2]{#2}
\begin{thebibliography}{10}

\bibitem{berndt1991}
J\"{u}rgen Berndt, \emph{Real hypersurfaces in quaternionic space forms}, J.
  Reine Angew. Math. \textbf{419} (1991), 9--26. \MR{1116915}

\bibitem{berndt1992-2}
J\"{u}rgen Berndt and Lieven Vanhecke, \emph{Curvature-adapted submanifolds},
  Nihonkai Math. J. \textbf{3} (1992), no.~2, 177--185. \MR{1199522}

\bibitem{berndt1992}
\bysame, \emph{Two natural generalizations of locally symmetric spaces},
  Differential Geom. Appl. \textbf{2} (1992), no.~1, 57--80. \MR{1244456}

\bibitem{besse2008}
Arthur~L. Besse, \emph{Einstein manifolds}, Classics in Mathematics,
  Springer-Verlag, Berlin, 2008. \MR{2371700}

\bibitem{cartan1946}
\'{E}. Cartan, \emph{Le\c{c}ons sur la {G}\'{e}om\'{e}trie des {E}spaces de
  {R}iemann}, second ed., Gauthier-Villars, Paris, 1946. \MR{0020842}

\bibitem{cecil2015}
Thomas~E. Cecil and Patrick~J. Ryan, \emph{Geometry of hypersurfaces}, Springer
  Monographs in Mathematics, Springer, New York, 2015. \MR{3408101}

\bibitem{dajczer1980}
Marcos Dajczer and Katsumi Nomizu, \emph{On sectional curvature of indefinite
  metrics. {II}}, Math. Ann. \textbf{247} (1980), no.~3, 279--282. \MR{568993}

\bibitem{dajczer2019}
Marcos Dajczer and Ruy Tojeiro, \emph{Submanifold theory: {B}eyond an
  introduction}, Universitext, Springer, New York, 2019. \MR{3969932}

\bibitem{datri1979}
J.~E. D'Atri, \emph{Certain isoparametric families of hypersurfaces in
  symmetric spaces}, J. Differential Geometry \textbf{14} (1979), no.~1,
  21--40. \MR{577876}

\bibitem{delbarco2014}
V.~del Barco, G.~P. Ovando, and F.~Vittone, \emph{On the isometry groups of
  invariant {L}orentzian metrics on the {H}eisenberg group}, Mediterr. J. Math.
  \textbf{11} (2014), no.~1, 137--153. \MR{3160618}

\bibitem{fornari2004}
Susana Fornari and Jaime Ripoll, \emph{Killing fields, mean curvature,
  translation maps}, Illinois J. Math. \textbf{48} (2004), no.~4, 1385--1403.
  \MR{2114163}

\bibitem{graves1978}
Larry Graves and Katsumi Nomizu, \emph{On sectional curvature of indefinite
  metrics}, Math. Ann. \textbf{232} (1978), no.~3, 267--272. \MR{478082}

\bibitem{gray2004}
Alfred Gray, \emph{Tubes}, second ed., Progress in Mathematics, vol. 221,
  Birkh\"{a}user Verlag, Basel, 2004. \MR{2024928}

\bibitem{koike2005}
Naoyuki Koike, \emph{Actions of {H}ermann type and proper complex equifocal
  submanifolds}, Osaka J. Math. \textbf{42} (2005), no.~3, 599--611.
  \MR{2166724}

\bibitem{koike2014-1}
\bysame, \emph{A {C}artan type identity for isoparametric hypersurfaces in
  symmetric spaces}, Tohoku Math. J. (2) \textbf{66} (2014), no.~3, 435--454.
  \MR{3266740}

\bibitem{koike2014-2}
\bysame, \emph{The constancy of principal curvatures of curvature-adapted
  submanifolds in symmetric spaces}, Differential Geom. Appl. \textbf{35}
  (2014), 103--113. \MR{3231750}

\bibitem{lee2018}
John~M. Lee, \emph{Introduction to {R}iemannian manifolds}, second ed.,
  Graduate Texts in Mathematics, no. 176, Springer, Cham, 2018. \MR{3887684}

\bibitem{milnor1976}
John Milnor, \emph{Curvatures of left invariant metrics on {L}ie groups},
  Advances in Math. \textbf{21} (1976), no.~3, 293--329. \MR{425012}

\bibitem{murphy2012}
Thomas Murphy, \emph{Curvature-adapted submanifolds of symmetric spaces},
  Indiana Univ. Math. J. \textbf{61} (2012), no.~2, 831--847. \MR{3043597}

\bibitem{nomizu1983}
Katsumi Nomizu, \emph{Remarks on sectional curvature of an indefinite metric},
  Proc. Amer. Math. Soc. \textbf{89} (1983), no.~3, 473--476. \MR{715869}

\bibitem{oneill1983}
Barrett O'Neill, \emph{Semi-{R}iemannian geometry: {W}ith applications to
  relativity}, Pure and Applied Mathematics, no. 103, Academic Press, New York,
  1983. \MR{719023}

\bibitem{ovando2016}
G.~P. Ovando, \emph{Lie algebras with ad-invariant metrics: {A} survey-guide},
  Rend. Semin. Mat. Univ. Politec. Torino \textbf{74} (2016), no.~1, 243--268.
  \MR{3772589}

\bibitem{rinehart1960}
R.~F. Rinehart, \emph{Skew matrices as square roots}, Amer. Math. Monthly
  \textbf{67} (1960), 157--161. \MR{120247}

\bibitem{ripoll1991}
Jaime~B. Ripoll, \emph{On hypersurfaces of {L}ie groups}, Illinois J. Math.
  \textbf{35} (1991), no.~1, 47--55. \MR{1076665}

\bibitem{ripoll1993}
Jaime~B. Ripoll and Marcos Sebastiani, \emph{The generalized {G}auss map and
  applications}, Rocky Mountain J. Math. \textbf{23} (1993), no.~2, 767--780.
  \MR{1226201}

\bibitem{terng1995}
C.-L. Terng and G.~Thorbergsson, \emph{Submanifold geometry in symmetric
  spaces}, J. Differential Geom. \textbf{42} (1995), no.~3, 665--718.
  \MR{1367405}

\end{thebibliography}
\end{document}